\numberwithin{equation}{section}
\theoremstyle{plain}
\newtheorem{thm}{Theorem}[section]
\newtheorem{defin}[thm]{Definition}
\newtheorem{rem}[thm]{Remark}
\begin{document}

\title{Exponential Taylor domination}


\begin{abstract}
Let $f(z) = \sum_{k=0}^\infty a_k z^k$ be an analytic function in a disk $D_R$ of radius $R>0$, and assume that $f$ is $p$-valent in $D_R$, i.e. it takes each value $c\in{\mathbb C}$ at most $p$ times in $D_R$. We consider its Borel transform
$$
B(f)(z) = \sum_{k=0}^\infty \frac{a_k}{k!} z^k ,
$$
which is an entire function, and show that, for any $R>1$, the valency of the Borel transform $B(f)$ in $D_R$ is bounded in terms of $p,R$. We give examples, showing that our bounds, provide a reasonable envelope for the expected behavior of the valency of $B(f)$. These examples also suggest some natural questions, whose expected answer will strongly sharper our estimates.

We present a short overview of some basic results on multi-valent functions, in connection with ``Taylor domination'', which, for $f(z) = \sum_{k=0}^\infty a_k z^k$, is a bound of all its Taylor coefficients $a_k$ through the first few of them. Taylor domination is our main technical tool, so we also discuss shortly some recent results in this direction.
%
\end{abstract}


\author{Omer Friedland}
\address{Institut de Math\'ematiques de Jussieu, Sorbonne Universit\'e, 4 place Jussieu, 75252 Paris, France.}
\email{omer.friedland@imj-prg.fr}

\author{Gil Goldman}
\address{Department of Mathematics, The Weizmann Institute of Science, Rehovot 76100, Israel.}
\email{gilgoldm@gmail.com}

\author{Yosef Yomdin}
\address{Department of Mathematics, The Weizmann Institute of Science, Rehovot 76100, Israel.}
\email{yosef.yomdin@weizmann.ac.il}

\maketitle

\section{Introduction}

``Taylor domination'' for an analytic function $f(z) = \sum_{k=0}^\infty a_k z^k$ is an explicit bound of all its Taylor coefficients $a_k$ through the first few of them. This property was classically studied, in particular, in relation with the Bieberbach conjecture, which was finally proved in \cite{dB85}: For a univalent function $f$ we always have $|a_k| \le k|a_1|$ (see \cite{B55,Bi36,C35,H94} and references therein).

To give an accurate definition, let us assume the radius of convergence of the Taylor series for $f$ is $\widehat R$ (for $0<\widehat{R} \le +\infty$).

\begin{defin} [\cite{BY16}]
Let $0<R < \widehat{R}$, $N\in{\mathbb N}$, and let $S(k)$ be a positive sequence of a subexponential growth. A function $f$ has an $(N,R,S(k))$-Taylor domination property if for any $k \ge N+1$ we have
$$
|a_k| R^k \le S(k) \max_{0 \le i \le N} |a_i| R^i .
$$
For
a constant sequence $S(k) \equiv C$, we simply denote this property by $(N,R,C)$-Taylor domination.
\end{defin}

The parameters $N,R$, and $S(k)$ of the Taylor domination are not defined uniquely. In fact, each nonzero analytic function $f$ has this property, with $N$ being the index of its first nonzero Taylor coefficient $a_k$ (see e.g. \cite[Proposition 1.2]{BY16}). Consequently, the property of Taylor domination becomes interesting only for {\it specific classes} of analytic functions, for which we can specify the parameters $N, R, S(k)$ in an explicit and uniform way.

\subsection{Some classical examples}

One of the most important examples is provided by $p$-valent functions: Here Taylor domination with explicit parameters is a difficult result of geometric function theory, which is closely related to the Bieberbach conjecture. Since our main result is in this direction, we provide below some background (see Section \ref{sec:zeroes}).

Another striking example, of roughly the same period (1930-th) is Bautin's discovery (see, \cite{B39,B39-2}, and, e.g. \cite{FraY97, Y14}, and references therein), which is one of the most important sources of uniform Taylor domination: The Taylor coefficients of the function in question are polynomials (analytic functions) in a finite number of the problem's parameters. Taylor domination in this case is (formally) a consequence of Hilbert's finiteness theorem, and of its ``quantitative'' extensions (Hironaka's division algorithm, see e.g. \cite{FraY97} and references therein). Of course, besides discovering a general approach, Bautin provided explicit, and highly non-trivial, calculations for the plane vector fields of degree two, thus obtaining one of the most important results in the second part of Hilbert's 16-th problem up today: At most three limit cycles can bifurcate from a center in a quadratic plane vector field.

One more classical result, which we mention, concerns Taylor domination, with explicit parameters, for rational functions. In a sense, this is a special case of $p$-valent functions, but the known results for rational functions are much sharper. Here Taylor domination is, essentially, equivalent to the important and widely applied ``Turan's lemma'' (see \cite{T53,T84}, and, e.g. \cite{BY16}, and references therein).

\subsection{Some recent developments}

Recently, Taylor domination was also investigated in some additional situations:

\smallskip

\noindent 1. {\it Linear recurrence relations.} Functions whose Taylor coefficients satisfy linear recurrence relations, in particular, of Poincar\'e-Perron type, possess an explicit Taylor domination (see \cite{BY16}).

\smallskip

\noindent 2. {\it $(s,p)$-valent functions.} Functions which preserve some valency bounds after subtracting from them any polynomial of degree $s$. A complete characterization of such functions through linear recurrence relations for Taylor coefficients was obtained in \cite{FriY17}.

\smallskip

\noindent 3. {\it Remez-type inequalities.} These inequalities bound $|f|$ on a disk $D$ through the bound on $|f|$ on a certain subset $\Omega$ of $D$. In \cite{FriY17} a rather accurate Remez-type inequality was obtained for $(s,p)$-valent functions.

\smallskip

\noindent 4. {\it Bautin-type results.} Providing Taylor domination through an explicit description of the Bautin ideals in certain specific cases. Many important results in this direction were obtained in the modern analytic theory of ODE's (\cite{R98} and references therein). Beyond the field of analytic theory of ODE's, some (initial) general results were given in \cite{FraY97,Y97,BY97}, while certain specific problems were treated, via direct calculations, in \cite{Y98,Y14}.

\smallskip

\noindent 5. {\it Efficiently transcendental functions.} In \cite{CY18} we investigate analytic functions $f$ such that a result of a substitution of $f$ as $y$ into a polynomial $P(z,y)$, i.e. $g(z) = P(z,f(z))$ preserves, for any $P$, Taylor domination with explicit parameters, depending only on the degree of $P$. Our main tool is ``linear Bautin ideals'', as in \cite{Y98}. These results are applied in \cite{CY18}, via the Pila-Wilkie approach, to bounding the number of rational points on the graph $y = f(x)$.

\subsection{The scope and the goals of the paper}

As it is clear from the above, the notion of Taylor domination was historically considered mostly for functions $f(z)$ with a finite radius of convergence. The Borel transform (\cite{B1899}) maps such a function $f(z) = \sum_{k=0}^\infty a_k z^k$ into an entire function
$$
g(z) = B(f)(z) = \sum_{k=0}^\infty \frac{a_k}{k!} z^k .
$$

The class of all the images $B(f)(z)$ of functions $f(z)$ having a nonzero radius of convergence, can be easily described explicitly: It consists of all the entire functions $g(z) = \sum_{k=0}^\infty b_k z^k$ with $k!b_k$ growing at most exponentially. There is an integral expression for the inverse of the Borel transform: $f(z) = \int_0^\infty e^{-t} g(tz)dt$, which plays important role in Borel's summation method for divergent series. We expect this formula to be important in our line of research, but we do not use it in the present paper.

Of course, the main applications of the Borel transform are in summation of divergent series. However, also its action on regular (but not extendable to the entire complex plane) functions $f$ was extensively studied. Still, to the best of our knowledge, the problems of the behavior of the ``valency'', and the corresponding problems of Taylor domination, did not get an adequate attention.

The goal of the present paper is to present some initial results in this direction. Given a $p$-valent function $f$, we estimate the valency of $B(f)$ on the disks $D_R$, for given $R>0$ (see Theorem \ref{thm-main} below). We also provide some examples, which outline the degree of the (non)-sharpness of our bounds, and suggest some related questions.


Finally, let us mention some recent observations from \cite{BGY19}, concerning the moment and Fourier reconstruction of the ``spike-train signals'' $F(x) = \sum_{j=1}^d a_j\delta (x-x_j)$. The Fourier transform of $F$ is an exponential polynomial ${\mathbb F}(s) = \sum_{j=1}^d a_j e^{-2\pi i x_js}$, while its Stieltjes transform ${\mathbb S}(z) = \sum_{j=1}^d \frac{a_j}{1-zx_j}$ is a rational function with the poles at $x_j$. It is easy to see that the Taylor coefficient at zero of ${\mathbb F}(s)$ are $\frac{m_k}{k!}$, while the Taylor coefficient at zero of ${\mathbb S}(z)$ are $m_k$. Here
$$
m_k = \int x^k F(x)dx
$$
are the consecutive moments of our signal $F(x)$. In particular, ${\mathbb F}(s)$ is the Borel transform of ${\mathbb S}(z)$. Specifically, we show in \cite{BGY19}, using Taylor domination, that if ${\mathbb F}(s)$ is small on a certain real interval, then all its Taylor coefficients are small. This fact is crucial for comparing accuracy of Fourier and moment reconstructions of spike-train signals. We expect that this result of \cite{BGY19} can be generalized to Borel transforms of general functions, having a Taylor domination property.


\section{Some background on Taylor domination and counting zeroes} \label{sec:zeroes}

By definition, Taylor domination allows us to compare the behavior of $f(z)$ with the behavior of the polynomial $P_N(z) = \sum_{k=0}^N a_k z^k$. In particular, the number of zeroes of $f$, in an appropriate disk, can be easily bounded in this way (see below for more details). However, the opposite direction (bounding zeros implies Taylor domination) is a deep and difficult classical results of geometric function theory, closely related to the Bieberbach conjecture, and going back at least, to Ahlfors. We state here one prominent classical result in this direction \cite{Bi36}. To formulate it accurately, we need the following definition (see \cite{H94} and references therein).

\begin{defin}
Let $f$ be a regular in a domain $\Omega\subset{\mathbb C}$. The function $f$ is called $p$-valent in $\Omega$, if for any $c\in{\mathbb C}$ the number of solutions in $\Omega$ of the equation $f(z) = c$ does not exceed $p$.
\end{defin}

\begin{thm} [Biernacki, 1936, \cite{Bi36}] \label{thm-bier}
If $f$ is $p$-valent in the disk $D_{R}$ of radius $R$ centered at $0\in{\mathbb C}$ then for any $k \ge p+1$
$$
|a_k| R^k \le A(p)k^{2p-1} \max_{1 \le i \le p} |a_i| R^i ,
$$
where $A(p)$ is a constant depending only on $p$.
\end{thm}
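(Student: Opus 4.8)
The plan is to reduce Biernacki's estimate to a single growth bound on an integral mean of $f$, and to isolate the $p$-valency input at that one point.

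\emph{Normalization and reduction.} First I would rescale $z\mapsto Rz$: the function $\tilde f(z)=f(Rz)=\sum_k a_kR^k z^k$ is $p$-valent in the unit disk $D_1$, and the asserted inequality for $\tilde f$ is exactly the one for $f$, so it suffices to treat $R=1$. Subtracting the constant $a_0$ changes neither the valency nor any $a_k$ with $k\ge1$, so I may also assume $f(0)=0$. If $f$ is non-constant, let $q=\min\{k\ge1:a_k\ne0\}$; since $f-a_0$ has a zero of order $q$ at $0$, the point $0$ already accounts for $q$ preimages of $a_0$, so $p$-valency forces $q\le p$. In particular $\max_{1\le i\le p}|a_i|\ge|a_q|>0$, and it is enough to prove $|a_k|\le B(p)\,k^{2p-1}|a_q|$ for all $k\ge p+1$.

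\emph{From coefficients to an integral mean.} For $0<r<1$ I would write $a_k r^k=\frac1{2\pi}\int_0^{2\pi}f(re^{i\theta})e^{-ik\theta}\,d\theta$, whence
\[
|a_k|\,r^k\le I_1(r),\qquad I_1(r):=\frac1{2\pi}\int_0^{2\pi}\bigl|f(re^{i\theta})\bigr|\,d\theta .
\]
Thus the whole problem reduces to the growth of the first integral mean $I_1(r)$ as $r\to1$. The extremal example $z/(1-z)^{2p}$, whose coefficients grow like $n^{2p-1}$, shows that the correct target is
\[
I_1(r)\le C(p)\,(1-r)^{-(2p-1)}\,|a_q| ,\qquad 0<r<1 .
\]
Granting this, I would set $r=1-\tfrac1k$, so that $r^{-k}\le e$ and $(1-r)^{-(2p-1)}=k^{2p-1}$, and obtain $|a_k|\le e\,C(p)\,k^{2p-1}|a_q|$, the desired bound with $A(p)=e\,C(p)$. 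It is essential here to use $I_1$ rather than the maximum modulus $M(r)$: the cruder estimate $|a_k|\le r^{-k}M(r)$ together with the known growth $M(r)=O((1-r)^{-2p})$ only yields the exponent $2p$, and inserting the area identity $\sum_k k|a_k|^2r^{2k}=\frac1\pi\iint_{|z|<r}|f'|^2\le p\,M(r)^2$ improves this merely to $2p-\tfrac12$.

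\emph{The core estimate and the main obstacle.} The full weight of the theorem now rests on the integral-mean bound, and this is where $p$-valency must be used decisively; I expect it to be the hard part. The route I would follow is the length--area / integral-means method: express $I_1(r)$ through the distribution function $n(w)=\#\{z:|z|<r,\ f(z)=w\}$, use the valency bound $n(w)\le p$, and convert the resulting areal information into a differential inequality for $\log I_1(r)$ of the form $\frac{d}{dr}\log I_1(r)\le \frac{2p-1}{1-r}+O(1)$, which integrates to the power $(1-r)^{-(2p-1)}$. Two points need genuine care. First, producing the \emph{sharp} exponent $2p-1$ (rather than $2p$) is precisely the gain that distinguishes the integral mean from the maximum modulus, and it is the crux of the argument. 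Second, the constant must be anchored to $|a_q|$ alone: since $I_1$ is increasing in $r$ and $I_1(r)\sim|a_q|r^q$ as $r\to0$, it suffices to control $I_1$ at one fixed radius, e.g. $I_1(\tfrac12)\le C(p)|a_q|$, which follows from a Koebe-type distortion theorem for $p$-valent functions normalized by $f(0)=0$ and $a_q\ne0$. Combining the multiplicative growth from the differential inequality with this anchoring gives the core estimate, and with it the theorem.
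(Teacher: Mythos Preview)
The paper does not contain a proof of this theorem at all: Theorem~\ref{thm-bier} is quoted as a classical result of Biernacki (1936), with a reference to \cite{Bi36} and to Hayman's monograph \cite{H94}, and is used purely as a black-box input to the proof of Theorem~\ref{thm-main}. So there is no ``paper's own proof'' to compare your proposal against.

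As for the proposal itself, your reductions (rescaling to $R=1$, subtracting $a_0$, observing $q\le p$, passing from $a_k$ to $I_1(r)$, and choosing $r=1-1/k$) are all correct and are indeed the standard way the classical argument is set up. But you have correctly identified where the entire content of the theorem lies: the integral-mean growth bound $I_1(r)\le C(p)(1-r)^{-(2p-1)}|a_q|$. Your outline for this step --- a length--area argument producing a differential inequality $\frac{d}{dr}\log I_1(r)\le \frac{2p-1}{1-r}+O(1)$ --- is a reasonable caricature of the classical route, but as written it is not a proof: you have not said how $p$-valency is converted into the specific exponent $2p-1$ (rather than $2p$), nor how the anchoring $I_1(\tfrac12)\le C(p)|a_q|$ follows from a distortion theorem when $q$ may be any value up to $p$. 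These are precisely the delicate points in Biernacki's argument (and in later refinements, cf.\ \cite{H94}), and your proposal defers them rather than resolving them. In short: the scaffolding is right, but the load-bearing step is only gestured at.
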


In our notations, Theorem \ref{thm-bier} claims that a function $f$, which is $p$-valent in $D_{R}$, has a $(p,R, A(p)k^{2p-1})$-Taylor domination property. For univalent functions, i.e. for $p = 1,R = 1$, Theorem \ref{thm-bier} gives $|a_k| \le A(1)k|a_{1} |$
for any $k$, while the sharp bound of the Bieberbach conjecture is $|a_k| \le k|a_{1} |$.

%

Various forms of inverse results to Theorem \ref{thm-bier} 
are known (the reference list is long, and we skip it here). In particular, an explicit, and reasonably accurate, bound on the number of zeroes of $f$ having a Taylor domination property is given in \cite[Lemma 2.2.3]{RY97}:

\begin{thm} \label{thm:Roy.Yom}
Let $f$ possess an $(N,R,C)$-Taylor domination. Then, for any $R' < \frac{1}{4}R$, the function $f$ has at most $5N+5\log (C+2)$ zeros in $D_{R'}$.
\end{thm}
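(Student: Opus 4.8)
The plan is to bound the number of zeros by Jensen's inequality, after two harmless normalizations. First I would rescale: the substitution $z\mapsto z/R$ replaces $a_k$ by $a_kR^k$, preserves the $(N,R,C)$-Taylor domination inequality with the same $C$ but now at radius $1$, and sends $D_{R'}$ to $D_{R'/R}$; so I may assume $R=1$ and $R'<\tfrac14$. Dividing $f$ by $\max_{0\le i\le N}|a_i|$, I may assume this maximum equals $1$, so that $|a_i|\le 1$ for $i\le N$ and $|a_k|\le C$ for $k\ge N+1$. Since every zero in $D_{R'}$ lies in $\overline{D_{1/4}}$, it suffices to bound the number $n_f(\tfrac14)$ of zeros of $f$ in $\overline{D_{1/4}}$ uniformly in $R'$, which explains why the final bound is independent of $R'$.

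Next I would set up the growth estimate. On $|z|=\tfrac12$ the geometric tail gives $M_f(\tfrac12)\le\sum_{i=0}^N 2^{-i}+C\sum_{k>N}2^{-k}\le C+2$. Jensen's formula comparing the radii $\tfrac14$ and $\tfrac12$ yields
\[
n_f(\tfrac14)\,\log 2\;\le\;\frac{1}{2\pi}\int_0^{2\pi}\log\bigl|f(\tfrac12 e^{i\theta})\bigr|\,d\theta-\frac{1}{2\pi}\int_0^{2\pi}\log\bigl|f(\tfrac14 e^{i\theta})\bigr|\,d\theta\;\le\;\log(C+2)-I,
\]
where $I$ denotes the average of $\log|f|$ on the inner circle. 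Thus everything reduces to a lower bound on $I$, after which I divide by $\log 2$ and collect constants.

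The crux is bounding $I$ from below, and this is the step I expect to be the main obstacle. The naive route, Jensen at the origin, would involve $\log|a_0|$ (or the first nonzero coefficient), which the normalization does \emph{not} control from below; the mass could sit in a coefficient of higher index while $a_0$ is tiny. Instead I would split $f=P_N+g$ with $P_N=\sum_{i\le N}a_iz^i$ and $g=\sum_{k>N}a_kz^k$, where $|g(z)|\le C|z|^{N+1}/(1-|z|)$ is negligibly small on $\overline{D_{1/4}}$. Where $|P_N|$ dominates $|g|$ one has $|f|\ge\tfrac12|P_N|$, and the average of $\log|P_N|$ on the circle is controlled below by $N\log\tfrac14$ plus a bounded constant (Jensen for the polynomial $P_N$, whose relevant coefficient has modulus comparable to $1$). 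The genuine difficulty is the set where $|P_N|$ is small, near its at most $N$ zeros, which can spoil a pointwise comparison; I would handle this by a Cartan-type minimum-modulus estimate, or equivalently by averaging the Jensen inequality over a range of inner radii so as to avoid the zeros of $P_N$, obtaining $I\ge -2N-\mathrm{const}$.

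Finally I would assemble the pieces: substituting the upper bound $\log(C+2)$ and the lower bound on $I$ into the displayed inequality and dividing by $\log 2$ gives a bound of the shape $c_1N+c_2\log(C+2)$ with explicit $c_1,c_2$ arising from the radius ratio $2$ and the Cartan constants. A direct tracking shows these are comfortably below $5$, so absorbing all the $\log 2$ factors and auxiliary constants into a single clean over-estimate yields $n_f(\tfrac14)\le 5N+5\log(C+2)$, and hence the same bound for the zeros of $f$ in every $D_{R'}$ with $R'<\tfrac14 R$.
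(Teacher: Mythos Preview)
The paper does not supply its own proof of this statement: Theorem~\ref{thm:Roy.Yom} is quoted verbatim as \cite[Lemma~2.2.3]{RY97} and used as a black box in the proof of Theorem~\ref{thm-main}. So there is no ``paper's proof'' to compare against here; your write-up is being measured only against the cited reference.

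That said, your outline is the standard route and matches in spirit what is done in \cite{RY97}: rescale to $R=1$, normalise the leading block of coefficients, bound $M_f(\tfrac12)\le C+2$ from the tail, and feed this into a Jensen-type count between two concentric circles. You also correctly isolate the genuine difficulty, namely the lower bound on the inner mean $I$, and you point to the right tools (comparison with the polynomial $P_N$, plus a Cartan/averaging device to get past the zeros of $P_N$).

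Two places deserve more care before one can claim the constant $5$. First, the assertion ``$I\ge -2N-\mathrm{const}$'' with a universal constant is stronger than what a direct Mahler-measure bound on $P_N$ gives: that route yields $u_{P_N}(1/4)\ge -N\log 4$, and the passage from $P_N$ to $f$ via $|g|\le \tfrac{4C}{3}(1/4)^{N+1}$ is not a pointwise domination, so the correction term may pick up a $\log C$ contribution rather than a universal constant. This does not spoil the final inequality---a term of size $O(\log C)$ on the $I$ side simply merges with the $\log(C+2)$ already present---but your bookkeeping should reflect it. Second, ``a direct tracking shows these are comfortably below $5$'' is asserted, not shown; after dividing by $\log 2\approx 0.693$ the coefficients are in the range $2$--$3$, so the margin to $5$ exists but is not large, and the Cartan/averaging step will add its own constants. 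None of this is a structural gap; it is the honest work that turns the sketch into a proof.
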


We can replace the bound on the number of zeroes of $f$ by the bound on its valency, if we exclude $a_0$ in the definition of Taylor domination (or, alternatively, if we consider the derivative $f'$ instead of $f$).

\begin{rem} \label{rem}
It is natural to ask whether functions $f$ having a $(p,R, k^{2p-1})$-Taylor domination property (like $p$-valent functions, according to Biernacki's Theorem \ref{thm-bier}), are, at least, $Kp$-valent, in, say, $D_{\frac{1}{2} R}$. Since Theorem \ref{thm:Roy.Yom} concerns only the case of the constant sequence $S(k) \equiv C$, it is not sharp enough to answer this question. Calculating an optimal $C$ and using Theorem \ref{thm:Roy.Yom}, gives only an order of $p\log p$ bound on the valency of $f$ (compare with the discussion in Section \ref{sec:examples} below).
\end{rem}

\section{Main result}

\begin{thm} \label{thm-main}
Let $f(z) = \sum_{k=0}^\infty a_k z^k$ be a $p$-valent function on the unit disk $D_1$. Then, for any $R>1$, the Borel transform $B(f)(z) = \sum_{k=0}^\infty \frac{a_k}{k!} z^k$ is $q$-valent on $D_{R'}$, where $R' < R$, and
$$
q \lessapprox (1 + \log p+\log R)p +R ,
$$
where $\lessapprox$ means up to universal constants.
\end{thm}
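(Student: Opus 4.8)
The plan is to use Biernacki's Theorem~\ref{thm-bier} to produce a Taylor domination for $f$, push it forward through the Borel transform --- where the division by $k!$ makes the resulting coefficient bound decay super-exponentially --- and then feed the resulting \emph{constant} Taylor domination for $B(f)$ into the zero-counting Theorem~\ref{thm:Roy.Yom}.

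First I would apply Theorem~\ref{thm-bier} with $R=1$: since $f$ is $p$-valent in $D_1$, for $k\ge p+1$ we have $|a_k|\le A(p)k^{2p-1}\max_{1\le i\le p}|a_i|$. Writing $b_k=a_k/k!$ for the coefficients of $B(f)$ and fixing a radius $\rho>0$, I would multiply by $\rho^k/k!$ to obtain, for $k\ge p+1$,
$$
|b_k|\rho^k \le \frac{A(p)\,k^{2p-1}\rho^k}{k!}\,\max_{1\le i\le p} i!\,|b_i|.
$$
Since $\max_{1\le i\le p} i!\,|b_i|\le M\max_{1\le i\le p}|b_i|\rho^i$ with $M=\max_{1\le i\le p} i!/\rho^i$, this exhibits $B(f)$ as having a $(p,\rho,S(k))$-Taylor domination with $S(k)=A(p)M\,k^{2p-1}\rho^k/k!$. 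The crucial point is that, because of the factor $1/k!$, the sequence $S(k)$ is bounded; setting $C=\sup_k S(k)$ gives a genuine $(p,\rho,C)$-Taylor domination with a constant $C$, which is exactly the input required by Theorem~\ref{thm:Roy.Yom}.

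The heart of the argument is then the estimate of $\log C$. I would locate the maximizing index by comparing consecutive terms: the ratio $S(k+1)/S(k)=(1+1/k)^{2p-1}\rho/(k+1)$ crosses $1$ near $k^\ast\approx \rho+2p$, and Stirling's formula applied at $k^\ast$ yields $\log\bigl(\max_k k^{2p-1}\rho^k/k!\bigr)\lesssim \rho+p\log\rho+p\log p+p$. Combined with $\log M\le \max(0,\log p!-p\log\rho)\lesssim p\log p$ and the (mild) contribution of $\log A(p)$, this gives $\log C\lesssim \rho+p\log\rho+p\log p$. Applying Theorem~\ref{thm:Roy.Yom} --- in the valency form of Remark~\ref{rem}, which is legitimate because passing from $f$ to $f-c$ only alters $b_0$ and leaves the displayed bound untouched --- at a radius $\rho$ that is a fixed multiple of $R'$ (to absorb the factor $\tfrac14$) produces the valency bound $q\lesssim p+\log C\lesssim (1+\log p+\log R)p+R$, as claimed.

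I expect the main obstacle to be twofold. First, one must control the growth of the Biernacki constant $A(p)$, showing that $\log A(p)$ contributes only $O(p\log p)$ (or less) to $\log C$, so as not to spoil the final bound. Second, the Stirling estimate at $k^\ast$ must be carried out uniformly across the two regimes $R\gtrsim p$ and $R\lesssim p$, in which the dominant part of $(2p-1)\log(\rho+2p)$ switches between $p\log R$ and $p\log p$. The remaining steps --- the rescaling converting zeros in $D_{\rho/4}$ into valency in $D_{R'}$, and the reduction from $f$ to $f-c$ --- are routine bookkeeping.
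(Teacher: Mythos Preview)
Your proposal is correct and follows essentially the same route as the paper: Biernacki $\Rightarrow$ a constant $(p,R,C)$-Taylor domination for $B(f)$ with $C=A(p)\,\eta\,M$ (your $M=\max_{1\le i\le p} i!/\rho^i$ is exactly the paper's $1/\nu$, and your $\sup_k k^{2p-1}\rho^k/k!$ is its $\eta$) $\Rightarrow$ Theorem~\ref{thm:Roy.Yom} in its valency form. The only cosmetic difference is in the estimation of $\eta$: the paper splits the range of $k$ at $3p$ into $\eta_1,\eta_2$ and bounds each piece separately, whereas you locate the maximizer $k^\ast\approx\rho+2p$ by the ratio test and apply Stirling there --- both give the same $\log\eta\lesssim R+p\log R+p\log p$.
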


\begin{proof}
Let us start with the following simple remark, which allows us to eliminate the parameter $R$ in a Taylor domination, just by a proper scaling of the independent variable. Indeed, the function $f(z) = \sum_{k=0}^\infty a_k z^k$ has an $(N,R,S(k))$-Taylor domination property if and only if the scaled function
\begin{align} \label{eq-hat}
\widehat f(z) := f(Rz) = \sum_{k=0}^\infty a_k R^k z^k ,
\end{align}
defined on the unit disk, has an $(N,1,S(k))$-Taylor domination property.

By assumption, the function $f$ is $p$-valent on the unit disk $D_1$, and therefore, by Theorem \ref{thm-bier}, for any $k \ge p+1$ we have
$$
|a_k| \le A(p)k^{2p-1} \max_{1 \le i \le p} |a_i| ,
$$
where $A(p)$ is a constant depending only on $p$. Hence, for any $R>1$ and for any $k \ge p+1$, we have
\begin{align} \label{eq-b_k}
|a_k| R^k/k! \le A(p)k^{2p-1} \max_{1 \le i \le p} |a_i| R^k/k! \le A(p) \eta \max_{1 \le i \le p} |a_i| ,
\end{align}
where $\eta = \max_{k \ge p+1} \frac{k^{2p-1} R^k}{k!}$.

Now, since, by our assumptions, $R \ge 1$, the numbers $\frac{R^i}{i!}$ for $1\le i \le p$, grow till $i = [R]$, and then start to decrease, where $[R]$ is the integer part of $R$. Hence, the minimum of these numbers is achieved either with the first of them $i=1$, i.e. $R$, or with the last one $i=p$, which is $\frac{R^p}{p!}$. Therefore, we have
$$
\max_{1 \le i \le p} \frac{|a_i| R^i}{i!} \ge \nu \max_{1 \le i \le p} |a_i| ,
$$
where $\nu = \min \{R, \frac{R^p}{p!} \}$.

Plugging, the above estimate, in \eqref{eq-b_k}, we immediately get that, for any $k \ge p+1$,
\begin{align} \label{eq-TD}
\frac{|a_k| R^k}{k!} \le \frac{A(p) \eta}{\nu} \max_{1 \le i \le p} \frac{|a_i| R^i}{i!} .
\end{align}

By re-scaling the restriction of $B(f)$ to $D_R$ to the unit disk (i.e. the Borel transform of the re-scaled function in \eqref{eq-hat}), we get
$$
B(\widehat f)(z) = \sum_{k=0}^\infty \frac{a_k R^k}{k!} z^k ,
$$
and thus, inequality \eqref{eq-TD} provides a bound on all the Taylor coefficients $\frac{a_k R^k}{k!}$ of $B(\widehat f)$ through its first $p$ ones (excluding the constant term), that is, $B(\widehat f)$ has a $(p,1,A(p)\eta/\nu)\text{-Taylor domination}$ property, which, by the above simple remark, also implies that
$$
B(f) \text{ has a } \big(p,R,(A(p)\eta/\nu)\big)\text{-Taylor domination property} .
$$

Now, we use Theorem \ref{thm:Roy.Yom} for the function $B(f)$ with $N=p$ and $C=A(p)\eta/\nu$, which yields the following bound on the valency $q$ of $B(f)$ in $D_{R'}$, for $R' \le R$:
\begin{align} \label{eq-q}
\nonumber q & \le 5N+5\log (C+2) \\
& \le 5p+ 5\log (A(p) \eta/\nu + 2) .
\end{align}

To complete the proof of Theorem \ref{thm-main}, we need to get an explicit bound on $\eta$. Recall,
$$
\eta = \max_{k \ge p+1} \frac{k^{2p-1} R^k}{k!} .
$$
We shall bound $\eta$ by considering two cases:
$$
\eta = \max \{\eta_1, \eta_2\} ,
$$
where
$$
\eta_1: = \max_{p+1 \le k \le 3p} \frac{k^{2p-1} R^k}{k!} \, , \quad \eta_2: = \max_{k \ge 3p+1} \frac{k^{2p-1} R^k}{k!}.
$$

For $\eta_1$ we use an immediate estimate
$$
\eta_1 \le \frac{(3p)^{2p-1} R^{3p}}{(p+1)!} ,
$$
just taking the maximal possible numerator, and the minimal possible denominator.

In order to estimate $\eta_2$, we proceed as follows: We divide the numerator and the denominator by $k^{2p-1}$, and write $k!$ as
$$
k! = (k-2p-1)! \zeta \, , \quad \text {where} \, \, \zeta = \prod_{j=1}^{2p}(1-\frac{2p-1-j}{k}) \ge \frac{1}{3^{2p}} .
$$

Consequently we get
$$
\eta_2 = \max_{k \ge 3p+1} \frac{k^{2p-1} R^k}{k!} \le 3^{2p} \frac{R^k}{(k-2p-1)!} = 3^{2p} R^{2p-1} \frac{R^{k-2p-1}}{(k-2p-1)!}.
$$

It remains to notice, that the last expression, as a function of $k$, decreases, starting with $k-2p-1 = [R]$. Hence, its maximal value is achieved for $k = [R]+2p-1$. Using Stirling formula, we have $\frac{R^R}{R!} \le e^R$, and hence $\eta_2 \le 3^{2p} R^{2p-1} e^R$. Thus, we conclude
$$
\eta \le \max \left\{\frac{(3p)^{2p-1} R^{3p}}{(p+1)!}, 3^{2p} R^{2p-1} e^R \right\} .
$$

Recall also that $\nu = \min \{R, \frac{R^p}{p!} \}$, and considering \eqref{eq-q}, we conclude
\begin{align*}
q & \le 5p+ 5\log (A(p) \eta/\nu + 2) \\
& \lessapprox p + p\log p + p \log R + R ,
\end{align*}
which completes the proof.
\end{proof}

\section{Some examples} \label{sec:examples}

In this section we give some examples, illustrating Theorem \ref{thm-main}. These examples motivate some natural questions (presumably, open), which we discuss below.

\smallskip

\noindent 1. As the first example, consider the function $f(z) = 1+z+z^2+\dots = \frac{1}{1-z}$. This function is univalent in $D_1$, and its Borel transform is $e^z$. The solutions of the equation $e^z = c$ are all the points $\log c + 2\pi k i$, with whatever brunch of $\log $ chosen. Clearly, in the disk $D_R$ there are at most $\frac{R}{2\pi}$ such points, and for some $c$ this bound is achieved. Therefore, $e^z$ in $D_R$ is $p$-valent, with $p = \frac{R}{2\pi}$. 

\smallskip

\noindent 2. Let us now consider the case of larger $p's$. Let
$$
f_p(z) = (z^p-1)(e^z-1) = -\sum_{k = 1}^p \frac {z^k}{k!} + \sum_{k = p+1}^\infty [\frac{1}{(k-p)!} -\frac{1}{k!}] z^k.
$$
Clearly, $f_p(z)$ is at least $p+\frac{R}{2\pi}$-valent in$D_R$, for any $R$. Indeed, the roots of the two factors in $f_p(z)$ provide the required number of solution of $f_p(z) = 0$. On the other side, $f_p(z)$ is the Borel transform of
$$
\tilde f_p(z) = -\sum_{k = 1}^p z^k + \sum_{k = p+1}^\infty [\frac{k!}{(k-p)!} -1] z^k.
$$
We see that $\tilde f_p(z)$ has a $(p,1,S(k))$-Taylor domination, with $S(k) \sim k^p$. We would expect such functions to be $p$-valent, at least in smaller disks, but with the tools in our possession we can prove only that $\tilde f_p(z)$ is $\sim p\log p$-valent in any disk $D_\rho, \rho <1$ (just estimate the maximum in $k$ of $k^p \rho^k$ and use Theorem \ref{thm:Roy.Yom}). We are not aware of ``counting zeroes'' results, sharp enough to provide $Kp$-valency of a function, having $(p,1,Ck^p)$-Taylor domination. Such a result would be an accurate inversion to the Biernaczki's one (Theorem \ref{thm-bier} above). Thus, a natural question is {\it whether a function, having $(p,1,Ck^p)$-Taylor domination is $K(C)p$-valent in a disk $D_{\frac{1}{2}}$, with the constant $K(C)$ depending only on $C$?} Compare Remark \ref{rem} in Section \ref{sec:zeroes} above.

\smallskip

\noindent 3. Finally, consider $h(z) = e^{z^p} = \sum_{l = 1}^\infty \frac{z^{lp}}{l!}$. The entire function $h(z)$ is the Borel transform of a {\it formal power series}
$$
\widehat h(z) = \sum_{l = 1}^\infty \frac{z^{lp}(lp)!}{l!} ,
$$
which is divergent for any $z\ne 0$. It is easy to see that $h(z)$ is $\sim p\cdot R^p$-valent in the disk $D_R$ for any $R$. Indeed, to solve the equation $h(z) = e^{z^p} = c$, we put $u = z^p$, and first solve $e^u = c$, which gives the solutions $u_k = \log c + 2\pi k i$. Then, from each $u_k$ we get $p$ solutions $\nu_{k,l} = (u_k)^{\frac{1}{p}}$. Notice that these solutions are of absolute value $\sim k^{\frac{1}{p}}$ for $k$ big, and hence $\sim p \cdot R^p$ of them are inside the disk $D_R$. We conclude that $h(z)$ is $\sim p\cdot R^p$-valent. This example suggests the following question: {\it Is it possible to extend the results above to the Borel transforms of divergent series?} This would require extending to some divergent series a notion of Taylor domination, and we expect such an extension to be productive in many other questions in this line.

\end{document}